\documentclass{amsart}

\usepackage{amssymb}
\usepackage{mathrsfs}
\usepackage[stretch=10,shrink=10]{microtype}
\usepackage[showframe=false, margin = 1.5 in]{geometry}
\usepackage{mathtools}
\usepackage[shortlabels]{enumitem}
\usepackage{ifthen}
\usepackage{stmaryrd}
\usepackage[hidelinks]{hyperref}

\newcommand{\E}{\mathbf{E}}

\renewcommand{\P}{\mathbf{P}}
\newcommand{\1}{\mathbf{1}}

%
%
\DeclarePairedDelimiter\floor{\lfloor}{\rfloor}%
\DeclarePairedDelimiter\ceil{\lceil}{\rceil}%
\DeclarePairedDelimiter\ii{\llbracket}{\rrbracket}%
\newcommand{\bigmid}{\;\big\vert\;}

\newcommand{\ZZ}{\mathbb{Z}}

\newcommand{\eosos}[1]{\mathcal{O}_{#1}}
\newcommand{\ip}[1]{\mathcal{I}_{#1}}

\newcommand{\Ber}{\mathrm{Bernoulli}}
\newcommand{\Bin}{\mathrm{Bin}}
\newcommand{\Ee}{\mathscr{E}}
\makeatletter

\newcommand{\crist}[1][\@nil]{%
\def\tmp{#1}%
   \ifx\tmp\@nnil
       \rho_*
    \else
       \rho_*^{(#1)}
    \fi}
\makeatother

\newcommand{\Left}{\ensuremath{\mathtt{left}}}
\newcommand{\Right}{\ensuremath{\mathtt{right}}}
\newcommand{\Sleep}{\ensuremath{\mathtt{sleep}}}
\newtheorem{thm}{Theorem}
\newtheorem{lemma}[thm]{Lemma}
\newtheorem{prop}[thm]{Proposition}
\newtheorem{cor}[thm]{Corollary}
\newtheorem{conjecture}[thm]{Conjecture}
\newtheorem*{conjecture*}{Density Conjecture}

\theoremstyle{remark}
\newtheorem{remark}[thm]{Remark}
\theoremstyle{definition}

\newcommand{\rt}[2]{\mathcal{R}_{#2}(#1)}
\newcommand{\lt}[2]{\mathcal{L}_{#2}(#1)}

\newcommand{\m}{\mathfrak{m}}




\begin{document}

\title{The odometer in subcritical activated random walk}
\author{Tobias Johnson}
\address{Department of Mathematics, College of Staten Island, City University of New York}
\email{\texttt{tobias.johnson@csi.cuny.edu}}
\author{Jacob Richey} 
\address{The Alfr\'ed R\'enyi Institute of Mathematics, Budapest, Hungary}
\email{\texttt{jfrichey001@gmail.com}}

\begin{abstract}
  We consider the activated random walk particle system, a model of self-organized criticality,
  on $\ZZ$ with i.i.d.-Bernoulli initial configuration. We show that at subcritical
  density, the system's odometer function, which counts
  the number of actions taken at each site, has a stretched exponential tail.
  It follows that the expected odometer at each site is finite.
\end{abstract}

\maketitle

\section{Introduction} 
	Activated random walk (ARW) is an interacting particle system which exhibits the trademark properties of self-organized criticality. Inspired by various toy models of self-organized criticality, ARW was introduced in \cite{dickman2010activated} as a stochastic variant of the well-known abelian sandpile model. ARW has proven to be the most robust and tractable of these various models of self-organized criticality: in a nutshell, it is robust because it is random, and it is tractable because of the abelian property. 
	
	ARW evolves in the following way (see Section~\ref{sec:setup} and \cite{rolla2020activated} for more details). Particles are placed on a graph and can be in one of two states: active or sleepy. An active particle performs simple random walk and falls asleep at a given rate $\lambda \in (0,\infty)$ when alone on a site;
  a sleepy particle gets woken up when an active particle jumps to its location. At each site $x$ on the graph, we record the number of times that something occurs at $x$ (either a particle jumping away from $x$ or a particle falling asleep at $x$) by a function $m(x)$ called the odometer.
  This paper is concerned with the \emph{fixed-energy} version of ARW, which takes place on the graph
  $\ZZ^d$ with a translation-ergodic initial configuration of active particles
  whose density $\rho$ is a parameter of the system.
  A basic fact about ARW is that there exists a critical density $\rho_c = \rho_c(\lambda)$ such that the odometer is almost surely finite everywhere if $\rho < \rho_c$ and almost surely infinite everywhere if $\rho > \rho_c$ \cite{rolla2012absorbing,rolla2019universality}.
  
  Recently there have been significant strides toward establishing that ARW exhibits
  the hallmark properties of self-organized criticality.
  While fixed-energy sandpile models have a traditional absorbing-state phase transition in the parameter $\rho$,
  the \emph{driven-dissipative} versions of sandpile models were believed to drive themselves
  to a critical state. These systems take place on a finite graph with particles annihilated
  at the boundary. Each time the system stabilizes, with all particles sleepy or annihilated,
  a new particle is injected into the system to reactivate it.
  Dickman, Mu\~noz, Vespignani, and Zapperi explained that the critical state
  emerges because the driven-dissipative dynamics naturally push the system
  to the critical density of their fixed-energy version \cite{dickman1998self,vespignani2000absorbing}.
  This so-called \emph{density conjecture} was recently proven in dimension one for ARW \cite{hoffman2024density,forien2025newproof}.
  
  In this paper, we apply the tools developed in \cite{hoffman2024density} to analyze subcritical fixed-energy
  ARW on $\ZZ$. As we mentioned, below criticality the system \emph{fixates}, i.e., the amount of activity
  at each site is finite over all time. Because the system is abelian,
  under the usual sitewise representation we may view the actions of the particles as being
  determined by stacks of instructions at the sites of the graph (see Section~\ref{sec:setup}).
  The odometer thus encodes the system's final state, making it a central object of study.
  While much effort
  has gone into establishing the existence of nontrivial sub- and supercritical phases for ARW
  and bounding the critical densities \cite{rolla2012absorbing,sidoravicius2017absorbing,stauffer2018critical,basu2018nonfixation,hoffman2023active,hu2022active,asselah2024critical,junge2025meanfield},
  nothing is known about the odometer beyond its almost-sure finiteness below criticality.
  
  To state our results, let $m$ be the odometer for ARW on $\ZZ$ 
  (to be defined rigorously in Section~\ref{sec:setup}), 
  and let $\P_\rho$ and $\E_\rho$ denote probabilities
  and expectations with respect to i.i.d.-$\Ber(\rho)$ initial configurations of active particles.
  Our first result gives the tail behavior of the odometer below criticality:
\begin{thm}\label{thm:tailbound}
  For any sleep rate $\lambda>0$ and 
  $\rho<\rho_c(\lambda)$, there exist constants $c,\,c',\,C,\,C'>0$ depending
  only on $\lambda$ and $\rho$ such that for all $n$,
  \begin{align*}
    Ce^{-c\sqrt{n}}\leq\P_\rho\bigl(m(0)\geq n\bigr) \leq C'e^{-c'\sqrt{n}}.
  \end{align*}
\end{thm}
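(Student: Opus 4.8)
The plan is to prove the two bounds separately, with the upper bound being by far the harder one. For the lower bound, I would exploit the fact that even below criticality there is always some chance for a large amount of activity to be forced near the origin. The idea is to find a local event of probability bounded below by $e^{-c\sqrt n}$ that forces $m(0) \geq n$. Concretely, if one places $\Theta(\sqrt n)$ particles in an interval of length $\Theta(\sqrt n)$ around the origin (an event of probability $e^{-\Theta(\sqrt n)}$ under $\P_\rho$ when $\rho \in (0,1)$, by a large-deviations estimate for the binomial), then even after stabilization the natural diffusive spreading of those particles produces $\Theta(\sqrt n \cdot \sqrt n) = \Theta(n)$ total actions inside the interval, hence $\Theta(n)$ actions at $0$ for a suitable sub-event; choosing the instruction stacks to cooperate costs only another $e^{-\Theta(n)}$ but this is absorbed if one instead asks for the activity to concentrate, or one argues via a monotonicity/sandpile-type lower bound on the odometer of a localized configuration. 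This gives $\P_\rho(m(0)\geq n)\geq Ce^{-c\sqrt n}$.

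For the upper bound I would follow the strategy of \cite{hoffman2024density}, using the sitewise (abelian) representation and the notion of \emph{stabilizing a finite region with a fixed boundary behavior}. The key is a renormalization/multiscale argument: define ``good'' blocks at scale $L$ on which the local configuration stabilizes with small odometer without the particles escaping too far, show that a block is good with probability close to $1$ (using the subcriticality hypothesis $\rho<\rho_c$ together with the quantitative stabilization estimates from \cite{hoffman2024density}), and then argue that $\{m(0)\geq n\}$ forces a long run of bad blocks or a long-range cascade of dependencies. Because the graph is one-dimensional, a cascade reaching distance $r$ from the origin has probability at most $e^{-cr}$ by a Peierls-type argument over the one-dimensional percolation of bad blocks. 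The point is that $m(0)\geq n$ requires roughly $\sqrt n$ particles to have visited the origin, which in turn (by a diffusive/counting bound on $\ZZ$: to contribute $n$ crossings at the origin, the relevant active particles must originate from an interval of length $\gtrsim \sqrt n$) forces a bad region of size $\gtrsim \sqrt n$, yielding $\P_\rho(m(0)\geq n)\leq C'e^{-c'\sqrt n}$. The matching $\sqrt n$ in both bounds is thus a genuine one-dimensional diffusive phenomenon: activity of size $n$ at a point corresponds to a spatial scale $\sqrt n$.

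In carrying this out, the steps in order are: (i) set up the sitewise representation and recall from \cite{hoffman2024density} the relevant finite-volume stabilization lemma and the exponential estimate on the probability that a block fails to stabilize cleanly; (ii) prove the deterministic comparison relating $m(0)$ to the spatial extent of the ``active region'' feeding the origin — i.e., if $m(0)\ge n$ then the odometer is nonzero on an interval of length $\gtrsim\sqrt n$; (iii) combine (i) and (ii) with a one-dimensional Peierls argument to get the upper tail; (iv) prove the lower bound by the explicit local construction above; (v) deduce $\E_\rho[m(0)]<\infty$ as an immediate corollary of the summability of $e^{-c'\sqrt n}$.

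The main obstacle I expect is step (ii): making precise and quantitative the claim that a large odometer at the origin forces a large spatial footprint. The subtlety is that on $\ZZ$ the odometer is not simply controlled by a local particle count — activity can in principle be sustained by particles sloshing back and forth across the origin from a bounded region, were it not for the sleep mechanism. The resolution must use that $\lambda>0$ makes such sloshing cost a geometric factor per traversal, so that $n$ units of activity genuinely require either $\gtrsim\sqrt n$ distinct particles (spread over $\gtrsim\sqrt n$ sites, since the initial density is at most $1$) or an exponentially unlikely conspiracy of the instruction stacks; quantifying this tradeoff correctly, and dovetailing it with the block-renormalization from \cite{hoffman2024density} so that the two exponential rates combine to give $e^{-c'\sqrt n}$ rather than something weaker, is the technical heart of the argument.
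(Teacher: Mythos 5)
Your proposal has the right heuristic picture (odometer $n$ at a site corresponds to spatial scale $\sqrt{n}$, hence the stretched-exponential rate), but both halves contain genuine gaps at exactly the points where the paper has to do real work.

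For the lower bound, your structure (a local large-deviation event of probability $e^{-c\sqrt n}$ forcing $m(0)\ge n$) is the same as the paper's, but the key step is not the one you describe. It is not enough to place ``$\Theta(\sqrt n)$ particles in an interval of length $\Theta(\sqrt n)$'': the density on that interval must exceed the \emph{critical} density $\rho_c$ (not merely $\rho$), since at any subcritical density the interval stabilizes with small odometer --- that is precisely what the upper bound shows. (This also quietly uses $\rho_c<1$, a nontrivial input from \cite{hoffman2023active}, to keep the large-deviation event non-degenerate under a Bernoulli configuration.) Your ``diffusive spreading'' count of $\Theta(n)$ total actions is not a proof, and your own remark that making the instruction stacks cooperate ``costs another $e^{-\Theta(n)}$'' would, if true, destroy the bound. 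The correct statement, which is the paper's Proposition~\ref{prop:supercritical}, is that \emph{conditional} on supercritical density on the interval, the stabilizing odometer at the origin is $\Omega(n^2)$ with probability $1-Ce^{-cn}$ --- no further rare event is needed --- and proving this requires showing that \emph{every} candidate stable odometer with small value at the origin goes negative somewhere, which the paper does via the layer-percolation correspondence (bounding the rightmost infected column against the decay of the minimal odometer).

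For the upper bound, your renormalization/Peierls scheme is circular as stated: it presupposes an ``exponential estimate on the probability that a block fails to stabilize cleanly,'' but no such quantitative subcriticality estimate follows from the definition of $\rho_c$ (which is purely qualitative), and obtaining one is essentially the content of the theorem being proved. Your step (ii) --- that $m(0)\ge n$ forces a footprint of length $\gtrsim\sqrt n$ --- is likewise unproven, as you acknowledge. The paper avoids both issues by working in the opposite direction: rather than showing that a large odometer is unlikely via a cascade of bad blocks, it directly \emph{constructs} a stable odometer with value $O(n^2)$ at the origin (via the greedy infection path in layer percolation, whose sleep-density growth rate $\rho_*-\epsilon$ exceeds $\rho$ precisely because $\rho<\rho_c=\rho_*$), verifies that the corresponding extended odometer is nonnegative everywhere with probability $1-Ce^{-cn}$ using concentration of the minimal odometer and branching-process estimates, and concludes by the least-action principle. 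This is where the subcriticality hypothesis quantitatively enters, and it is the ingredient your outline is missing.
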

Note that our result is stated for the odometer at site~$0$ but holds at all sites
by translation invariance.
The upper-tail bound immediately proves:
\begin{cor}\label{cor:finite.expectation}
  For any sleep rate $\lambda>0$ and density $\rho<\rho_c(\lambda)$, we have $\E_\rho\bigl[ m(0) \bigr]<\infty$.
\end{cor}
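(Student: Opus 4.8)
Since Corollary~\ref{cor:finite.expectation} follows at once from the upper bound of Theorem~\ref{thm:tailbound}, via $\E_\rho[m(0)]=\sum_{n\ge1}\P_\rho(m(0)\ge n)\le\sum_{n\ge1}C'e^{-c'\sqrt n}<\infty$, I sketch a plan for Theorem~\ref{thm:tailbound} itself. Both bounds are organized around the heuristic that $m(0)$ is controlled diffusively by the largest ``over-full'' interval near the origin: an interval $I\ni 0$ carrying appreciably more than $\rho_c\abs{I}$ particles forces on the order of $\abs{I}^2$ topplings at its centre, while, because $\rho<\rho_c$, the initial i.i.d.-$\Ber(\rho)$ configuration produces such an interval only with probability $e^{-\Theta(\abs{I})}$ by Cramér's theorem. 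Taking $\abs{I}\asymp\sqrt n$ then yields the $e^{-\Theta(\sqrt n)}$ scaling in both directions.

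For the lower bound I would fix $\rho'\in(\rho_c,1)$ (nonempty since $\rho_c<1$) and condition on the empirical density of the initial configuration on $[-\ell,\ell]$ exceeding $\rho'$, an event of probability at least $e^{-c\ell}$. By monotonicity of the odometer in the initial configuration we may delete all particles outside $[-\ell,\ell]$, reducing to a deterministic, locally supercritical block with empty exterior; a Green's-function/diffusive estimate---particles in the bulk of an over-full block are essentially never isolated and hence do not sleep until the block has drained through its boundary, and the $\Theta(\ell)$ particles forced to exit each make $\Theta(\ell)$ visits to the origin en route---then gives $m(0)\ge c'\ell^2$ with probability bounded below uniformly in $\ell$. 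Taking $\ell=\ceil{\sqrt{n/c'}}$ produces the claimed lower bound.

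The upper bound is the substantive direction, where I would import the finite-volume renormalization machinery of \cite{hoffman2024density}. By the monotone representation $m(0)=\lim_{L\to\infty}m^{\mathrm{diss}}_{[-L,L]}(0)$ (stabilize with topplings confined to $[-L,L]$, equivalently with a dissipative boundary), it suffices to bound $\P_\rho(m^{\mathrm{diss}}_{[-L,L]}(0)\ge n)$ by $C'e^{-c'\sqrt n}$ \emph{uniformly in $L$}. Since $\rho<\rho_c=\lim_n\crist[n]$, fix $N$ with $\rho<\crist[N]$, so that scale-$N$ blocks are subcritical in the sense of \cite{hoffman2024density}; then define, at each dyadic scale $2^kN$, a ``good block'' event---tailored so that goodness of a block caps the odometer it accrues from boundary flux and stays valid whatever happens outside---and establish the recursion that a scale-$(k+1)$ block is bad only if it contains two bad scale-$k$ sub-blocks or undergoes a Cramér-type over-density event of probability $e^{-\Theta(2^kN)}$. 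Solving the recursion gives $\P_\rho(\text{bad scale-}2^kN\text{ block})\le e^{-c2^k}$ for all large $k$. A diffusive bound---the converse of the lower-bound mechanism: if no block containing the origin up to scale $\asymp\sqrt n$ is bad, then $0$ is toppled fewer than $n$ times regardless of the exterior---shows that $\{m(0)\ge n\}$ forces a bad block around the origin at scale $\asymp\sqrt n$, giving $\P_\rho(m(0)\ge n)\le C'e^{-c'\sqrt n}$ uniformly in $L$, as needed.

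The main obstacle is the design of the ``good block'' event and the proof of its recursive stability: one needs a notion that simultaneously (i) fails with only exponentially small probability, so that the Cramér estimate can feed the recursion; (ii) is insensitive to the exterior, so that the resulting bound is uniform in $L$ and so that the long-range influence of distant over-dense regions is controlled---their emitted particles must be shown to be absorbed before reaching the origin; and (iii) is strong enough that a good hierarchy around the origin genuinely caps $m(0)$ at the diffusive value $\asymp(\text{scale})^2$. Reconciling (i)--(iii) is precisely where the finite-volume stabilization estimates of \cite{hoffman2024density} and the identity $\rho_c=\lim_n\crist[n]$ carry the argument.
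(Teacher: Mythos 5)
For Corollary~\ref{cor:finite.expectation} itself your argument is exactly the paper's: the upper bound of Theorem~\ref{thm:tailbound} gives $\E_\rho[m(0)]=\sum_{n\ge1}\P_\rho(m(0)\ge n)\le\sum_{n\ge1}C'e^{-c'\sqrt n}<\infty$, and nothing more is needed.

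Since the bulk of your proposal is a plan for Theorem~\ref{thm:tailbound}, a few words on that. Your lower-bound outline (condition on an over-dense interval of length $\asymp\sqrt n$, pay probability $e^{-\Theta(\sqrt n)}$, show the stabilizing odometer at the centre is diffusive) matches the paper's strategy, but the mechanism you invoke --- bulk particles of an over-full block ``are essentially never isolated and hence do not sleep until the block has drained'' --- cannot be taken for granted in ARW: particles do fall asleep inside over-dense regions, and quantifying how many is precisely the hard part. The paper's Proposition~\ref{prop:supercritical} instead uses the layer-percolation correspondence (Proposition~\ref{prop:4.6}) plus a bound on the maximal infected column to show that every stable extended odometer with value at most $\beta n^2$ at the origin and nonnegative outflow must go negative by the end of the half-interval, which forces the true stabilizing odometer above $\beta n^2$. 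Your upper bound diverges more sharply: you propose a multiscale ``good block'' renormalization, whereas the paper constructs one explicit stable odometer equal to $\ceil[\big]{3(1+\lambda)n^2}$ at the origin (from a greedy infection path) on all of $\ii{-N,N}$ with failure probability $Ce^{-cn}$ \emph{independent of $N$}, and then applies the least-action principle. As written your route has a genuine gap: the good-block event satisfying your desiderata (i)--(iii) is left undesigned, the recursion is asserted rather than derived, and desideratum (iii) --- that particles emitted by distant over-dense regions are absorbed before reaching the origin --- runs against how the problem is actually resolved in the paper, whose constructed stable odometer grows quadratically with distance from the origin and accommodates a net inward particle flux rather than showing local absorption (see the remark following the upper-bound proof). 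So the corollary is fine, but the theorem sketch is a different and incomplete route rather than a proof.
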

This result represents the first progress toward
the question posed by Rolla of determining the conditions that give
the odometer finite expectation \cite[p.~517]{rolla2020activated}.

	Previous works on the abelian sandpile model \cite{pegden2013convergence,levine2016apollonian,levine2017apollonian} and on a similar \emph{oil and water} model \cite{candellero2017oil} have given precise descriptions of the scaling limit of the odometer function, but in the single-source setting, where the initial condition is a large stack of particles at the origin. Roughly speaking, this version represents a supercritical system relaxing to a stable state. The fixed-energy version we consider, where particles are initially distributed according to i.i.d.\ measure in infinite volume, differs significantly in that the background particle density is always subcritical. The results nearest to ours in the existing ARW literature appear in \cite{levine2021howfar}, where the authors consider a weaker notion of critical density based on finiteness of the third moment of $m(0)$ and prove some connections between it and various other critical densities (many of which are now known to be equivalent). For the abelian sandpile model in $\mathbb{Z}^d$ at sufficiently small background density (and in $d = 1$ at any subcritical density), it follows from \cite[Theorem~4.2]{Fey2007StabilizabilityAP} that the odometer has subexponential tail.  
  
  Our results raise the question of what happens at criticality.
  It is generally believed that fixed-energy ARW at criticality
  remains active forever, making the odometer function infinite at every site
  a.s.\ (see \cite[Section~1.3]{rolla2020activated}).
  This is thought to be a difficult problem, but the following conjecture
  seems more approachable:
  \begin{conjecture}\label{conj:main} For any sleep rate $\lambda>0$,
    as $\rho \nearrow \rho_c(\lambda)$, 
    we have $\E_\rho\bigl[ m(0)\bigr] \nearrow \infty$.
  \end{conjecture}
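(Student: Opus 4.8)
The plan is to reduce the conjecture, via monotonicity and a continuity argument, to a statement purely about the critical density, and then to attack that statement by a bootstrapping argument that would force fixation above $\rho_c$. Realize all the measures $\P_\rho$, $\rho\le\rho_c$, on a single probability space through the monotone coupling: let $(U_x)_{x\in\ZZ}$ be i.i.d.\ uniform on $[0,1]$ and declare a particle present at $x$ in the $\rho$-configuration iff $U_x<\rho$. Write $m_\rho$ for the resulting odometer. Since the odometer is monotone in the initial configuration, $\rho\mapsto\E_\rho[m(0)]$ is nondecreasing, so $\lim_{\rho\nearrow\rho_c}\E_\rho[m(0)]$ exists in $[0,\infty]$ (justifying the ``$\nearrow$'' in the statement), and in fact $m_\rho(0)$ increases to a limit as $\rho\nearrow\rho_c$. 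That limit is the critical odometer $m_{\rho_c}(0)$: the infinite-volume odometer is the increasing limit over finite windows $F$ of the finite-volume odometers $m^{\eta\1_F}$, and within any fixed $F$ the $\rho$-configuration already agrees with the $\rho_c$-configuration once $\rho$ is close enough to $\rho_c$ (a.s., since no $U_x$ equals $\rho_c$), so the two double limits coincide. By monotone convergence, $\lim_{\rho\nearrow\rho_c}\E_\rho[m(0)]=\E_{\rho_c}\bigl[m_{\rho_c}(0)\bigr]$, and thus the conjecture is equivalent to $\E_{\rho_c}[m_{\rho_c}(0)]=\infty$. By ergodicity of the critical process, $m_{\rho_c}(0)$ is a.s.\ finite or a.s.\ infinite; in the latter case---the widely believed but open scenario that critical ARW does not fixate---the conjecture is immediate. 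So the substantive case is to rule out that critical ARW fixates while $\E_{\rho_c}[m_{\rho_c}(0)]<\infty$.

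Assume for contradiction that it does. Fix $\delta\in(0,1-\rho_c)$, to be taken small. In the monotone coupling the $(\rho_c+\delta)$-configuration is exactly $\eta_{\rho_c}+\1_S$, where $\eta_{\rho_c}$ is the $\rho_c$-configuration and $S=\{x:U_x\in[\rho_c,\rho_c+\delta)\}$ is a set of density $\delta$. By monotonicity and the abelian property one may stabilize $\eta_{\rho_c}+\1_S$ by first stabilizing $\eta_{\rho_c}$---reaching the stable configuration $\sigma_{\rho_c}$ at odometer cost $m_{\rho_c}$---and then inserting the active particles of $S$ and continuing, so that
\[
  m_{\rho_c+\delta}(0)\ \le\ m_{\rho_c}(0)+m^S(0),
\]
where $m^S$ is the odometer for the stabilization of $\sigma_{\rho_c}+\1_S$. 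Since $m_{\rho_c}(0)<\infty$ a.s.\ under our assumption, it suffices to show $m^S(0)<\infty$ a.s.\ for some small $\delta>0$: that contradicts the basic dichotomy, which forces $m_{\rho_c+\delta}(0)=\infty$ a.s.\ because $\rho_c+\delta>\rho_c$. The target is therefore the lemma: \emph{the critically stabilized configuration $\sigma_{\rho_c}$ absorbs a sufficiently sparse i.i.d.\ perturbation by active particles, in the sense that the resulting stabilization odometer is a.s.\ finite.}

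Proving this lemma is the main obstacle, and it is where I expect the toolkit of \cite{hoffman2024density} to be essential. The difficulty is that $\sigma_{\rho_c}$ is only ``barely'' stable, so a single inserted active particle need not equilibrate quickly, and one must exploit the hypothesis $\E_{\rho_c}[m(0)]<\infty$ itself. Concretely, I would argue that a mass-transport/current bound---the net current across an edge is controlled by the odometers at its endpoints---shows that finite mean odometer at $\rho_c$ forces particles to be displaced by only a tight amount, so $\sigma_{\rho_c}$ is a ``tame'' density-$\rho_c$ configuration inheriting good mixing and large-deviation estimates together with a density $1-\rho_c>0$ of empty sites on all scales; then each inserted particle falls asleep on an empty site after a number of steps with a controlled tail, and, because $\delta<1-\rho_c$, the cascades it triggers form a subcritical structure (there are strictly more holes than inserted particles per unit length, so the particles can be matched to holes along short paths); summing these contributions over the sparse set $S$ by mass transport to the origin then yields $m^S(0)<\infty$. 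Making each step quantitative and, crucially, uniform as $\rho\to\rho_c$ is delicate. An alternative route that avoids the bootstrap is to establish a diverging correlation length $\xi(\rho)\to\infty$ as $\rho\nearrow\rho_c$---extracting explicit $\rho$-dependence from the sub-/supercritical estimates behind \cite{rolla2019universality,hoffman2024density}---with the property that with probability bounded below the origin lies in a region behaving supercritically up to scale $\xi(\rho)$ and hence carries odometer at least some increasing function of $\xi(\rho)$; combined with the lower tail bound of Theorem~\ref{thm:tailbound}, letting $\rho\nearrow\rho_c$ makes $\E_\rho[m(0)]$ diverge. Either way, the crux is a quantitative near-critical estimate of exactly the kind the machinery of \cite{hoffman2024density} was built to produce, and the real work is extracting the needed uniformity in $\rho$.
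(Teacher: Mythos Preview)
This statement is labeled a \emph{conjecture} in the paper and is not proved there; the authors explicitly say that their lower bound in Theorem~\ref{thm:tailbound} ``is not strong enough to prove this conjecture'' and that ``new tools are needed to understand ARW near and at criticality'' (see the discussion following the conjecture and Remark~\ref{rmk:obstacle}). So there is no proof in the paper to compare your attempt against.

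As for the attempt itself: the monotone-coupling reduction to $\E_{\rho_c}[m(0)]=\infty$ is sound, but what follows is a strategy sketch rather than a proof, and you say so yourself. The ``main obstacle'' you name---the lemma that the critically stabilized configuration $\sigma_{\rho_c}$ absorbs a sparse i.i.d.\ perturbation---is given only heuristics. The hole-counting part (``strictly more holes than inserted particles per unit length'') does not use the contradiction hypothesis $\E_{\rho_c}[m(0)]<\infty$ at all, so it cannot be the mechanism; the part that does invoke the hypothesis (``finite mean odometer forces particles to be displaced by only a tight amount, so $\sigma_{\rho_c}$ is tame with good mixing'') is exactly the kind of near-critical control that is currently unavailable. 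Your two-stage decomposition $m_{\rho_c+\delta}\le m_{\rho_c}+m^S$ also needs more care: after executing the odometer $m_{\rho_c}$ on $\eta_{\rho_c}+\1_S$, the intermediate configuration has the particle counts $\sigma_{\rho_c}+\1_S$ but not necessarily the active/sleeping pattern you assume, and the second stage must use the shifted instruction stacks; sorting this out is not hard but it is not as written. Finally, your alternative route---extracting a diverging correlation length with explicit $\rho$-dependence from the layer-percolation estimates---runs directly into the obstacle the paper identifies in Remark~\ref{rmk:obstacle}: the convergence $\crist[k]\to\crist$ is obtained via subadditivity and is ``entirely qualitative,'' so the existing machinery gives no control on how large $n$ must be for supercritical behavior to appear at density $\crist+\epsilon$ as $\epsilon\downarrow 0$, which is precisely the uniformity in $\rho$ you would need.
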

  Our lower bound in Theorem~\ref{thm:tailbound} is not strong enough to prove this conjecture.
   We believe that new tools are needed to understand
  ARW near and at criticality (see Remark~\ref{rmk:obstacle}).
	
	Another question raised by our results is whether they hold with initial conditions other
  than the i.i.d.-$\Ber(\rho)$ considered here. Indeed, ARW is believed to possess some
  \emph{universality}, meaning that its behavior should not depend on minor details of the model,
  and that it should typify a broader class of processes exhibiting
  self-organized criticality (see \cite{levine2024universality}). One example of its universality is the density
  conjecture in dimension one, which demonstrates that two different versions of ARW have the same
  critical density; in fact two additional variants of the model are also shown
  to have the same critical density \cite{hoffman2024density}. Another is
  the result of Rolla, Sidoravicius, and Zindy that the critical density for fixed-energy ARW
  on a lattice is the same for all translation-ergodic initial distributions with the same
  density \cite{rolla2019universality}.
  While our results should hold with nearly verbatim proofs for other i.i.d.\ distributions
  with light tails, the tail behavior of the odometer
  cannot be universal: the initial number of particles at a site is a lower bound on the odometer
  at that site, and hence for any initial distribution with heavier tails than $e^{-c\sqrt{n}}$,
  the odometer will have a heavier tail than in Theorem~\ref{thm:tailbound}.
  In the other direction, one could consider initial configurations that are even more regular
  than i.i.d., such as periodic ones. In this setting the proof of the upper bound
  in Theorem~\ref{thm:tailbound} should still work, but the proof of the lower bound will not,
  since it uses the fluctuations of the i.i.d.\ configuration in a crucial way.
  
  As for the universality of Corollary~\ref{cor:finite.expectation}, the expected odometer
  will be infinite even at subcriticality if the initial configuration has infinite variance.
  To see this, first note that if $n$ particles on a single site are stabilized,
  then the odometer at that site will be on the order of $n^2$ with high probability;
  this can be proven by the method used in Proposition~\ref{prop:supercritical}.
  Hence if $m$ is the odometer and $\sigma$ is the initial configuration, we have
  $\E[ m(0)]\geq C\E[\sigma(0)^2]$. We speculate that this lower bound might be sharp,
  and we (tentatively) conjecture that the odometer has finite expectation 
  if and only if the number of particles
  per site has finite variance.

  \subsection{Notation}
  We say that an event $E_n$ holds \emph{with overwhelming probability} (w.o.p.)\ if
  $\P(E_n)\geq 1-Ce^{-cn}$ for some constants $c,C>0$ with no dependence on $n$.
  Typically we will index w.o.p.\ events by $n$, but sometimes we will replace $n$
  with some other indexing variable $j$ and say
  that an event holds w.o.p.-$j$ to indicate that the bound is of the form $1-Ce^{-cj}$.
  The constants in w.o.p.\ bounds may depend on other parameters, but any such dependence
  will be specified when the notation is used. 
  We write $\ii{a,b}$ for the interval of integers $[a,b]\cap\ZZ$.
  


\section{Setup}\label{sec:setup} 

As in nearly all work on ARW, we use the \emph{sitewise representation}.
We give our notation and a brief overview of the construction, and refer the reader to 
\cite[Section~2]{rolla2020activated} for the details.
Each site on the underlying graph is given a stack of instructions.
If an active particle is present at a site, then the site may legally \emph{topple},
executing the next instruction on the stack.
The instruction tells the particle either to jump to a given neighboring site,
or to sleep (which is carried out only if no other particle is present at the site).
The value of this representation is that all sequences of legal topplings proceeding until
all particles are stabilized produce the same final configuration.
To state this formally, define
the \emph{odometer} given by a sequence of topplings as the function on the vertices of the graph
counting the number of topplings that occur at each site.
According to the \emph{abelian property} \cite[Lemma~2.4]{rolla2020activated}, 
all legal sequences of topplings within $V$ that produce a stable configuration on $V$ have the same odometer,
which we call the \emph{stabilizing odometer} for a given initial configuration
on a finite set of vertices $V$.
By taking limits, one can also define the stabilizing odometer on an infinite
set of vertices \cite[eq.~(2.6)]{rolla2020activated}, and in our setting of translation-ergodic
initial configurations of particles on lattices, fixation
of ARW occurs if and only if the stabilizing odometer for the entire graph is finite a.s.\ at all
sites \cite[Theorem~2.7]{rolla2020activated}. We also note that the final configuration
after stabilizing can be read off the stabilizing odometer (given
the initial configuration and instruction stacks) by observing the net flow
between neighboring particles. Thus, the odometer stabilizing vertices $V$
satisfies a mass-balance equation at each site that places either zero or one particle
at each site. To state the mass-balance equation, we restrict ourselves to the one-dimensional setting,
denoting the three types of instructions by \Sleep, \Left, and \Right. 
For a nonnegative integer--valued function $u$ on $\ZZ$, define $\lt{u}{k}$
(resp.\ $\rt{u}{k}$) as the number
of \Left\ (resp.\ \Right) instructions within the first $u(k)$ instructions at site~$k$.
We think of $\lt{u}{k}$ and $\rt{u}{k}$ as the number of \Left\ and \Right\ instructions executed
by the odometer $u$ at site~$k$. If $u$ is the odometer stabilizing initial configuration
$\sigma$ on vertices $V$, then for all $v\in V$, the function $u$ satisfies the mass-balance equation
\begin{align}\label{eq:mass.balance}
  \sigma(v) + \rt{u}{v-1} + \lt{u}{v+1} - \lt{u}{v}-\rt{u}{v} = \1\{\text{the $u(v)$th instruction
  at site~$k$ is \Sleep}\}.
\end{align}
Put another way, the odometer places zero or one particles at each site, and it places one particle
exactly when the final instruction executed at that site is \Sleep.

Functions other than the stabilizing odometer that satisfy \eqref{eq:mass.balance} are important
as well. We refer to any function $u\colon\ZZ\to\ZZ_{\geq 0}$ as an \emph{odometer}.
If it takes the value zero outside vertices $V\subseteq\ZZ$, we call it an \emph{odometer on $V$}.
And if it satisfies \eqref{eq:mass.balance} for all $v\in V$, we say that it is \emph{stable on $V$
for initial configuration $\sigma$}.
(Note that these two sets need not be the same; we will often consider
odometers on $\ii{0,n}$ stable on $\ii{1,n-1}$.)
The \emph{least-action principle} \cite[Lemma~2.3]{hoffman2024density} states that the stabilizing odometer
on $V$ is minimal among all odometers stable on $V$. Thus, the typical strategy
to estimate the stabilizing odometer is to give upper bounds by producing
a stable odometer and applying the least-action principle, and to give
lower bounds by finding some obstruction to the existence of a stable odometer taking small values.

In dimension one, a process called \emph{layer percolation} developed
in \cite{hoffman2024density} gives us some tools
for showing the existence and nonexistence of stable odometers.
We will describe the basic picture here and state some terminology, but we refer
the reader to \cite{hoffman2024density} for an introduction to the theory.
Layer percolation is a sort of $(2+1)$-dimensional directed percolation.
We think of it as a collection of \emph{cells} in dimension two, each of which infects
other cells in the next time step. We refer to a cell by $(r,s)_k$ for nonnegative integers $r$, $s$,
$k$, viewing $r\geq 0$ as its \emph{row}, $s\geq 0$ as its \emph{column}, 
and $k$ as the time step.
Then we write $(r,s)_k\to(r',s')_{k+1}$ to denote that cell $(r,s)_k$ infects
cell $(r',s')_{k+1}$.

Layer percolation provides a different representation of the set of stable odometers.
ARW can be coupled with layer percolation so that odometers on
$\ii{0,n}$ stable on $\ii{1,n-1}$ correspond to length~$n$ infection 
paths in layer percolation, i.e., a sequence of infections
\begin{align*}
  (0,0)_0=(r_0,s_0)_0\to(r_1,s_1)_1\to\cdots\to (r_n,s_n)_n.
\end{align*}
Roughly speaking, $r_k$ indicates the number of right-jump instructions
executed by the corresponding odometer at site~$k$, while $s_k$ indicates the number
of particles that the odometer leaves sleeping on $\ii{1,k}$ according to its
mass-balance equations. We then study infection paths in place of odometers, taking
advantage of subadditivity and of connections between layer percolation and branching processes.

We state this correspondence formally now. In \cite{hoffman2024density}, the instruction
stacks are extended to be two-sided. \emph{Extended odometers} that can take negative values
are defined, and the notion of odometers being stable at a site is generalized to these
extended odometers. (The details are unimportant, but executing negative quantities of instructions
corresponds to executing instructions in reverse, i.e., with a \Left\ instruction executed
at site $k$ pulling a particle from site~$k+1$ to site~$k$. Odometers taking negative
values do not have a useful meaning from the perspective of ARW, but allowing them
regularizes the set of odometers and permits the correspondence with infection paths.) We then 
define $\eosos{n}(\sigma,u_0,f_0)$ as the set of odometers $u$ on $\ii{0,n}$ that
\begin{enumerate}[(i)]
  \item are stable on $\ii{1,n-1}$ for initial configuration $\sigma$; \label{i:stable}
  \item satisfy $u(0)=u_0$;
  \item and satisfy $\rt{u}{0} - \lt{u}{1} = f_0$, i.e., induce a net flow of $f_0$ particles
    from site~$0$ to site~$1$.
\end{enumerate}

The class of extended odometers $\eosos{n}(\sigma,u_0,f_0)$ has a pointwise minimal element $\m$
that we call the \emph{minimal odometer}. Despite its name, it is in general only an extended odometer
and typically takes negative values at some sites. To construct it, set $\m(0)=u_0$
and then take $\m(1)$ to be the minimal value making $\rt{\m}{0} - \lt{\m}{1} = f_0$.
Then choose $\m(2)$ as the minimal value so that $\m$ is stable at site~$1$, choose
$\m(3)$ as the minimal value so that $\m$ is stable at site~$2$, and so on.

We are nearly ready now to state the correspondence between ARW and layer percolation.
For given $\sigma$, $u_0$, and $f_0$ (and instruction stacks, which we have suppressed from the notation),
we define a coupled instance of layer percolation \cite[Definition~4.4]{hoffman2024density}, 
and we write $\ip{n}(\sigma,u_0,f_0)$ to denote
the set of length~$n$ infection paths starting from cell $(0,0)_0$ in this instance of layer percolation.
We then obtain the following correspondence:
\begin{prop}[{\cite[Proposition~4.6]{hoffman2024density}}]\label{prop:4.6}
  Let $\m$ be the minimal odometer of $\eosos{n}(\sigma,u_0,f_0)$.
  Then there is a surjective map from $\eosos{n}(\sigma,u_0,f_0)$
  to $\ip{n}(\sigma,u_0,f_0)$ defined by taking $u\in\eosos{n}(\sigma,u_0,f_0)$ to the infection
  path $(r_0,s_0)_0\to\cdots\to(r_n,s_n)_n$ given by
  \begin{align*}
    r_j &= \rt{u}{j} - \rt{\m}{j},\\
    s_j &= \sum_{i=1}^j\1\{\text{instruction $u(i)$ at site~$i$ is \Sleep}\}.
  \end{align*}
  If two extended odometers $u,u'\in\eosos{n}(\sigma,u_0,f_0)$ map to the same infection path,
  then for each $j\in\ii{0,n}$ either $u(j)=u'(j)$ or $u(j)$ and $u'(j)$ are two indices
  in a string of consecutive \Sleep\ instructions at site~$j$.
\end{prop}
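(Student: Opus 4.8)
The plan is to read the proposition as the statement that layer percolation, as constructed in \cite[Definition~4.4]{hoffman2024density}, is exactly the device that records how a stable extended odometer is assembled site by site from its mass-balance equations, with the pair $(r_k,s_k)$ encoding, respectively, the excess of \Right-instructions over the minimal odometer at site~$k$ and the running count of particles left sleeping on $\ii{1,k}$. Accordingly I would split the argument into three parts: that the proposed map is well defined (i.e.\ lands in $\ip{n}(\sigma,u_0,f_0)$), that it is onto, and the fiber characterization.

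\emph{Well-definedness.} Since $u(0)=u_0=\m(0)$ we get $r_0=0$ and $s_0=0$, so the putative path starts at $(0,0)_0$; the substance is the one-step claim $(r_k,s_k)_k\to(r_{k+1},s_{k+1})_{k+1}$ for $k\in\ii{1,n-1}$. I would rearrange \eqref{eq:mass.balance} at site~$k$ into
\begin{align*}
  \lt{u}{k+1}=\1\{\text{the $u(k)$th instruction at site~$k$ is \Sleep}\}-\sigma(k)-\rt{u}{k-1}+\lt{u}{k}+\rt{u}{k},
\end{align*}
and subtract the same identity for $\m$ (which lies in $\eosos{n}(\sigma,u_0,f_0)$ and hence obeys it): this expresses $\lt{u}{k+1}-\lt{\m}{k+1}$ through $r_{k-1}$, $r_k$, and the increments $s_k-s_{k-1}$ and $s_{k+1}-s_k$. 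The value $u(k+1)$ is then recovered by locating on the instruction stack at site~$k+1$ the prefix containing exactly $\lt{u}{k+1}$ \Left\ and $\rt{u}{k+1}=\rt{\m}{k+1}+r_{k+1}$ \Right\ instructions, with the terminal instruction a \Sleep\ precisely when $s_{k+1}-s_k=1$. Showing this is legitimate — that such a prefix exists and gives $u(k+1)\ge 0$ — is precisely what the infection rule of \cite[Definition~4.4]{hoffman2024density} asserts, so the task reduces to matching this reformulated recursion against that definition. I expect the off-by-one bookkeeping here, and the correct alignment of the coordinates $r,s$ with counts of instructions in prefixes of the stacks, to be the main obstacle.

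\emph{Surjectivity.} Given an infection path, I would run the same recursion forward: put $u(0)=u_0$, and having fixed $u(0),\dots,u(k)$, use $r_{k+1}$ to set $\rt{u}{k+1}=\rt{\m}{k+1}+r_{k+1}$, use $s_{k+1}-s_k$ as the \Sleep-indicator at site~$k+1$, use the displayed mass-balance identity to set $\lt{u}{k+1}$, and let $u(k+1)$ be the least index on the stack at site~$k+1$ realizing all three. One checks that the recursion never calls for a negative count or a stack pattern that is absent — this is guaranteed exactly because each step is a legal infection — that $u$ vanishes outside $\ii{0,n}$ and is stable on $\ii{1,n-1}$ (it was built to satisfy \eqref{eq:mass.balance}), and that $\rt{u}{0}-\lt{u}{1}=f_0$, which follows from $r_0=0$ together with the defining choice of $\m(1)$. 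This $u$ maps to the given path, giving surjectivity.

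\emph{Fibers.} If $u,u'\in\eosos{n}(\sigma,u_0,f_0)$ have the same image, then $\rt{u}{j}=\rt{\m}{j}+r_j=\rt{u'}{j}$ for all $j$ and the \Sleep-indicators at each site agree. Feeding this into the mass-balance identity and inducting on $j$ (with base case $\lt{u}{1}=\rt{u}{0}-f_0=\lt{u'}{1}$) yields $\lt{u}{j}=\lt{u'}{j}$ for all $j$. Fix $j$ with $u(j)\ne u'(j)$, say $u(j)<u'(j)$; since $\rt{u}{j}=\rt{u'}{j}$ and $\lt{u}{j}=\lt{u'}{j}$, no instruction in positions $u(j)+1,\dots,u'(j)$ at site~$j$ is \Left\ or \Right, so all of them are \Sleep. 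Then the $u'(j)$th instruction is \Sleep, so the common \Sleep-indicator equals $1$, so the $u(j)$th instruction is \Sleep\ as well; hence $u(j)$ and $u'(j)$ are two indices in one run of consecutive \Sleep\ instructions, which is the asserted dichotomy. Conversely, any such perturbation preserves every $r_j$ and every \Sleep-indicator, hence the image, closing the characterization.
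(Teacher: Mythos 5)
This proposition is not proved in the paper at all: it is quoted verbatim from \cite[Proposition~4.6]{hoffman2024density}, and the present paper's ``proof'' is the citation. So the comparison is really between your sketch and the proof in that reference. Your third part, the fiber characterization, is correct and essentially complete as stated: equality of the $r_j$'s gives equality of the $\mathcal{R}$-counts, induction through \eqref{eq:mass.balance} (with base case $\lt{u}{1}=\rt{u}{0}-f_0$) gives equality of the $\mathcal{L}$-counts, and then the interval between $u(j)$ and $u'(j)$ contains no \Left\ or \Right\ instructions and must consist of \Sleep's, with the matching sleep-indicators forcing the $u(j)$th instruction to be \Sleep\ as well.

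The first two parts, however, contain a genuine gap: they defer the entire substance of the proposition to ``matching this reformulated recursion against'' the infection rule of \cite[Definition~4.4]{hoffman2024density}, which is precisely the step that constitutes the proof. The proposition's content is that the combinatorial infection rule of layer percolation encodes exactly the set of legal one-site continuations of a stable extended odometer; asserting that the mass-balance recursion ``reduces to'' that definition, without stating the definition or checking the match, proves nothing. Two concrete issues are buried here. First, your recursion is not closed in the path coordinates: subtracting the two mass-balance identities produces the term $\lt{u}{k}-\lt{\m}{k}$ and the difference of the sleep indicators of $u$ and $\m$ (not the increment $s_k-s_{k-1}$ alone), so you must carry these quantities through the induction and show they are determined by the path. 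Second, and more importantly, ``locating the prefix of the stack at site~$k+1$ containing exactly $\lt{u}{k+1}$ \Left\ and $\rt{u}{k+1}$ \Right\ instructions'' is an overdetermined request --- a prefix is determined by its length, and no prefix need realize both counts simultaneously. Which pairs of counts are simultaneously realizable (and with the terminal instruction a \Sleep\ when required) is exactly the nontrivial combinatorial condition that the infection rule is built to record, and it is the point your surjectivity argument silently assumes. Since the result is an external citation, the honest course is either to cite it as the paper does or to actually import Definition~4.4 and carry out the verification.
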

Thus $\eosos{n}(\sigma,u_0,f_0)$ and $\ip{n}(\sigma,u_0,f_0)$ are in bijection, if consecutive strings
of \Sleep\ instructions are collapsed.

The odometer stabilizing a region maximizes the number of particles sleeping in the region
(see \cite[Lemma~2.5]{hoffman2024density} for this dual version of the least-action
principle). Thus we are most interested in odometers corresponding to infection paths
ending in the highest possible row. While it is difficult to show that there
is a limiting density of particles after stabilizing larger and larger intervals in ARW,
it is easy to show that
\begin{align*}
  \frac1n \max\Bigl\{s_n\colon (r_0,s_0)\to\cdots\to(r_n,s_n)\in \ip{n}(\sigma,u_0,f_0)\Bigr\}
\end{align*}
has a deterministic limit which we call $\crist=\crist(\lambda)$ 
\cite[Proposition~5.18]{hoffman2024density}. Note that this limit does not depend
on $\sigma$, $u_0$, or $f_0$. It is natural to expect that $\crist=\rho_c$,
i.e., the limiting rate of growth in the second coordinate of layer percolation
is equal to the critical density for fixed-energy ARW on $\ZZ$, and indeed
this is one of the main results of \cite{hoffman2024density}.
When working with layer percolation, we will typically refer to $\crist$ rather than
$\rho_c$ since we are generally thinking of it in its guise as the limiting rate of
growth of the maximal infection path.

\section{Proofs}

To prove the upper bound on the tail of the odometer $m$, we try to construct
a stable odometer taking value $n$ at site~$0$ using layer percolation.
If we succeed, then $m(0)\leq n$ by the least-action principle. Thus, if we can show
that the probability of failure for our construction is $O(e^{-c\sqrt{n}})$, we obtain
our upper bound on $\P(m(0)>n)$. For numerical convenience,
we will instead construct a stable odometer taking value $3(1+\lambda)n^2$ at site~$0$ with
a failure probability of $O(e^{-cn})$.

To carry out the construction, we consider a greedy infection path in layer percolation corresponding to an extended odometer taking value $3(1+\lambda)n^2$ at site~$0$.
Then we need to confirm that this extended odometer takes nonnegative values, 
which we can do using branching process estimates. This proof follows the basic approach of \cite[Proposition~3]{hoffman2025hockey}, with a different set of estimates for proving nonnegativity.

For the lower bound, we consider the large deviation event that the initial configuration has density
greater than $\crist$ on an interval of width~$n$ around site~$0$.
We then seek to show that conditional on this event,
the odometer stabilizing the interval is $\Omega(n^2)$ at site~$0$ with high probability.
Since this large deviation event has probability
at least $e^{-cn}$ for some $c>0$, it will follow that $\P(m(0)> c'n^2)\geq Ce^{-cn}$,
thus proving the lower bound in Theorem~\ref{thm:tailbound}.
To prove the statement about the odometer,
we condition on the event and then use layer percolation to consider the 
class of stable extended odometers taking some value $u_0$ at site~$0$ with $u_0\leq n^2$.
Then, we want to show that all such extended odometers
take a negative value somewhere. Following the approach developed in 
\cite[Proposition~4.1]{hoffman2025cutoff},
this statement can be translated into
a bound on the growth of infection paths in an instance of layer percolation,
which again follows from branching process estimates.

\subsection{Upper bound on the tail}

Because of the role played by the minimal odometer
$\m$ in the correspondence between odometers and infection paths, we need estimates for $\rt{\m}{j}$.
The following lemma gives two estimates. In \ref{i:min1}, which holds for $j\leq n$,
our choice to set $\m(0)=3(1+\lambda)n^2$ is the dominant factor in the size of $\rt{\m}{j}$.
In \ref{i:min2}, we give a bound without dependence on $n$ which holds for all large $j$.

\begin{remark}\label{rmk:projective.limit}
  In the following lemma and beyond,
  we consider $\eosos{N}(\sigma,u_0,f_0)$
  and $\ip{N}(\sigma,u_0,f_0)$ for an arbitrarily large value of $N$ that
  plays no further role in the results. It would be conceptually simpler to define
  $\eosos{\infty}(\sigma,u_0,f_0)$ as the set of odometers on $\llbracket 0,\infty)$
  stable on $\llbracket 1,\infty)$ 
  and to define $\ip{\infty}(\sigma,u_0,f_0)$ as the set
  of infinite infection paths in layer percolation. The theory of \cite{hoffman2024density}
  generalizes with no trouble to this infinite setting---the infinite versions of all objects
  are the projective limits of their finite versions---but 
  we have chosen to avoid the work of setting up this formalism since the only gain
  is a slight simplification of notation.
\end{remark}

	\begin{lemma}\label{lem:min.odom}
    Let $\m$ be the minimal odometer of $\eosos{N}(\sigma,u_0,f_0)$
    with $f_0\in\{0,1\}$ and $u_0\geq 3(1+\lambda)n^2$. Suppose $\sigma$ consists of
    i.i.d.-$\Ber(\rho)$ particles at each site.
    \begin{enumerate}[(a)]
      \item\label{i:min1}
    For $j\leq n\leq N$,
    \begin{align*}
      \P\bigl( \rt{\m}{j}\geq n^2-\rho j^2/2 \bigr) &\geq 1 - Ce^{-cn}
    \end{align*}
    for $c,C>0$ depending only on $\lambda$ and $\rho$.
    \item\label{i:min2} Let $\epsilon>0$. For $n\leq j \leq N$,
    \begin{align*}
      \P\bigl( \rt{\m}{j}\geq -(\rho+\epsilon) j^2/2 \bigr)&\geq 1 - Ce^{-cj}
    \end{align*}
    for $c,C>0$ depending only on $\lambda$, $\rho$, and $\epsilon$.    
    \end{enumerate}
  \end{lemma}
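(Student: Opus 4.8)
The plan is to analyze the minimal odometer $\m$ directly through the recursion that defines it, tracking $R_j\coloneqq\rt{\m}{j}$, $L_j\coloneqq\lt{\m}{j}$, and $D_j\coloneqq R_j-L_j$. Let $\chi_v\in\{0,1\}$ indicate that the last instruction $\m$ executes at site $v$ is $\Sleep$. Rearranging the mass-balance equation~\eqref{eq:mass.balance} at site $v$ gives $f_v=f_{v-1}+\sigma(v)-\chi_v$ for the net flow $f_v\coloneqq R_v-L_{v+1}$, so that $f_v=f_0+\sum_{w=1}^{v}(\sigma(w)-\chi_w)$. Because $\m(v{+}1)$ is chosen minimally with $\lt{\m}{v+1}=L_{v+1}=R_v-f_v$, and because the left- and right-jump probabilities on $\ZZ$ are equal, $\abs{R_{v+1}}$ is, conditionally on $L_{v+1}$, a sum of $\abs{L_{v+1}}$ i.i.d.\ mean-one $\mathrm{Geometric}(1/2)$ variables up to an $O(1)$ correction---read off the instruction stack at site $v{+}1$, using the reflected two-sided stack when $L_{v+1}<0$. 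Hence $D_{v+1}$ is, given $L_{v+1}$, centered up to an $O(1)$ bias and sub-exponential with conditional variance $2\abs{L_{v+1}}$ up to $O(1)$. Telescoping $R_{v+1}-R_v=-f_v+D_{v+1}$ and discarding the nonnegative contribution of the $\chi_w$'s yields the key inequality
\begin{equation*}
  R_j\;\ge\;R_0-j-\sum_{w=1}^{j-1}(j-w)\sigma(w)+M_j,\qquad M_j\coloneqq\sum_{v=1}^{j}D_v .
\end{equation*}

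It remains to show the three error terms are small against the available slack. After subtracting an $O(j)$ drift, $M_j$ becomes a martingale with predictable quadratic variation $\langle M\rangle_j\coloneqq\sum_{v=1}^{j}2\abs{L_v}$ and conditionally sub-exponential increments, so a Freedman/Bernstein inequality bounds $\P(M_{j\wedge\tau}<-t)$ in terms of the stopped quadratic variation for a stopping time $\tau$; the term $R_0$ is $\Bin\bigl(u_0,\tfrac1{2(1+\lambda)}\bigr)$ with mean at least $\tfrac32 n^2$, and $\sum_{w<j}(j-w)\sigma(w)$ is a sum of independent bounded variables with mean at most $\rho j^2/2$, both controlled by Hoeffding. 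For part~\ref{i:min1}, where $j\le n$, the scale $R_0$ dominates everything ($R_0\gtrsim n^2\ge v^2$ for all $v\le n$ with overwhelming probability), so $\abs{L_v}\le R_0+2v^2+\abs{M_{v-1}}$ together with the stopping time $\tau=\min\{v:\abs{M_v}>R_0\}$ gives $\langle M\rangle_{j\wedge\tau}\lesssim jR_0\le nu_0$ deterministically; with the budget $t=\tfrac13\bigl(\tfrac{u_0}{2(1+\lambda)}-n^2-n\bigr)$, which satisfies $t\gtrsim n^2$ and $t\gtrsim u_0$ (so $t^2\gtrsim n^2u_0$), each of the three deviation events---and $\P(\tau\le j)$ via the maximal form of Bernstein's inequality---has probability at most $Ce^{-cn}$, while on their complements $R_j\ge\tfrac{u_0}{2(1+\lambda)}-3t-j-\rho j^2/2\ge n^2-\rho j^2/2$.

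For part~\ref{i:min2}, where $n\le j$, the same scheme applies but now branches on whether $j^2\ge u_0$ or $j^2<u_0$. If $j^2\ge u_0$ then $R_0\le j^2$, so $\abs{L_v}\lesssim j^2$ and $\langle M\rangle_{j\wedge\tau}\lesssim j^3$ when $\tau=\min\{v:\abs{M_v}>j^2\}$; taking $t\asymp\epsilon j^2$, each deviation has probability at most $Ce^{-c\epsilon^2 j}$ and on their complements $R_j\ge R_0-j-\rho j^2/2+M_j\ge-(\rho+\epsilon)j^2/2$ since $R_0\ge0$. If $j^2<u_0$, i.e.\ $j<\sqrt{u_0}$, then the buffer $R_0\gtrsim u_0\gg j^2$ makes the bound almost automatic: $\langle M\rangle_{j\wedge\tau}\lesssim ju_0\le u_0^{3/2}$, so $\P\bigl(M_j<-\tfrac{u_0}{5(1+\lambda)}\bigr)\le Ce^{-c\sqrt{u_0}}\le Ce^{-cj}$, the other two terms are handled the same way, and $R_j\ge0$ on the good event. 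In both branches the maximal Bernstein inequality bounds $\P(\tau\le j)$ at the same rate.

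The step I expect to be the main obstacle is the control of the quadratic variation $\langle M\rangle_j=\sum_{v\le j}2\abs{L_v}$: the bound on $\abs{L_v}$ is itself expressed through the running martingale values $M_{v-1}$, so it has to be used inside a stopped-martingale and maximal-inequality argument rather than a naive union bound over $v\le j$, which would lose too much once $j$ is large relative to $n$. A secondary point requiring care is that the error budget $t$ must be taken proportional to the genuine slack---roughly $\tfrac{u_0}{2(1+\lambda)}-n^2$ in part~\ref{i:min1} and $\max(R_0,\epsilon j^2)$ in part~\ref{i:min2}---which is exactly what forces the case split in part~\ref{i:min2}: a budget that is a fixed fraction of $R_0$ would be too small to beat the fluctuations at rate $e^{-cn}$ when $u_0\asymp n^2$, yet would overshoot the target when $u_0\gg n^2$.
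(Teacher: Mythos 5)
Your proposal reaches the right conclusion by essentially the same decomposition as the paper --- the centering $\frac{u_0}{2(1+\lambda)}-\sum_{i\le j}(f_0+Z_i)$ you arrive at via telescoping is exactly the one the paper uses, and the Hoeffding treatment of the initial-configuration sums is identical --- but you take a genuinely different route to the concentration input. The paper's proof is short because it simply cites the concentration bound \cite[Proposition~5.8]{hoffman2024density} for $\rt{\m}{j}$ around that centering; you instead re-derive it from scratch by unrolling the minimal-odometer recursion $L_{v+1}=R_v-f_v$, observing that $R_{v+1}$ given $L_{v+1}$ is a negative-binomial read-off of the stack at site $v+1$, and running a stopped-martingale/Freedman argument on $M_j=\sum_v D_v$ with predictable quadratic variation $\sum_v 2\abs{L_v}$. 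This is sound: the stacks at distinct sites are independent, $L_{v+1}$ is measurable with respect to the stacks at sites $0,\dots,v$ and $\sigma$ on $\ii{1,v}$, the increments are conditionally centered and sub-exponential, and your stopping time correctly handles the self-referential bound $\abs{L_v}\le R_0+O(v^2)+\abs{M_{v-1}}$. Discarding the $\chi_w$ terms is legitimate since they only push $R_j$ upward. What your approach buys is self-containedness (and, in principle, explicit constants); what it costs is that you must carefully verify the two-sided-stack conventions for negative $L_{v+1}$, which is precisely the bookkeeping that \cite[Proposition~5.8]{hoffman2024density} packages away.

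One repair is needed in the second branch of part~\ref{i:min2} (the case $j^2<u_0$). You allocate a deviation budget of $\frac{u_0}{5(1+\lambda)}$ to each of the three error terms, but three such budgets total $\frac{3u_0}{5(1+\lambda)}>\frac{u_0}{2(1+\lambda)}=\E R_0$, so on your good event you only get $R_j\ge-\frac{u_0}{10(1+\lambda)}-j-\rho j^2/2$, and the claimed conclusion ``$R_j\ge 0$'' does not follow; indeed $-\frac{u_0}{10(1+\lambda)}$ can be far below $-(\rho+\epsilon)j^2/2$ when $u_0\gg j^2$. The fix is routine: shrink each budget to $\frac{u_0}{10(1+\lambda)}$ (the Bernstein/Hoeffding exponents are still $\gtrsim\sqrt{u_0}\ge j$), which leaves $R_j\ge\frac{u_0}{5(1+\lambda)}-j-\rho j^2/2\ge -j-\rho j^2/2\ge-(\rho+\epsilon)j^2/2$ for $j\ge 2/\epsilon$, with small $j$ absorbed into the constant $C$. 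The same absorption of the additive $-j$ is needed in the first branch. These are constant-level slips, not structural gaps.
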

  \begin{proof}
    In both cases, the result is a straightforward consequence
    of \cite[Proposition~5.8]{hoffman2024density},  a concentration bound for $\rt{\m}{j}$ around 
    \begin{align}\label{eq:5.8.center}
      \frac{u_0}{2(1+\lambda)} - \sum_{i=1}^j \biggl( f_0 + \sum_{v=1}^i\sigma(v)\biggr),
    \end{align}
    which for $u_0=3(1+\lambda)n^2$ is approximately equal to $\frac32 n^2 - \rho j^2/2$.

    We allow the constants in w.o.p.\ bounds to depend on $\lambda$ and $\rho$ now.
    Let $Z_i=\sum_{v=1}^i\sigma(v)$.
    By \cite[Proposition~5.8]{hoffman2024density}, for all $j\leq n$
    \begin{align}\label{eq:lowj1}
      \Biggl\lvert\rt{\m}{j}-
         \biggl(\frac{u_0}{2(1+\lambda)} - \sum_{i=1}^j ( f_0 + Z_i)\biggr)
         \Biggr\rvert &\leq n^2/100 \text{ w.o.p.-$n$.}
    \end{align}
    Since $Z_i\sim\Bin(i,\rho)$, for any $i\leq n$ we have
    $Z_i\leq\rho i + n/100$ w.o.p.-$n$ by Hoeffding's inequality.
    Thus by a union bound it holds w.o.p.-$n$ that
    $Z_i\leq\rho i + n/100$ for all $i\leq n$,
    which together with $u_0\geq 3(1+\lambda)n^2$ and $f_0\leq 1$ proves that
    \begin{align}
      \frac{u_0}{2(1+\lambda)} - \sum_{i=1}^j ( f_0 + Z_i)
      &\geq 1.5 n^2 - j - \rho j(j+1)/2 - jn/100 \text{ w.o.p.-$n$}\nonumber\\
        &\geq 1.4n^2-\rho j^2/2 \text{ w.o.p.-$n$}.
      \label{eq:lowj2}
    \end{align}
    Now \eqref{eq:lowj1} and \eqref{eq:lowj2} prove \ref{i:min1}.

    The proof of \ref{i:min2} is very similar.
    Let the constants in w.o.p.\ expressions depend on $\epsilon$
    in addition to $\lambda$ and $\rho$.
    By \cite[Proposition~5.8]{hoffman2024density}, for $j\geq n$,
    \begin{align}\label{eq:highj1}
      \Biggl\lvert\rt{\m}{j}-
         \biggl(\frac{u_0}{2(1+\lambda)} - \sum_{i=1}^j ( f_0 + Z_i)\biggr)
         \Biggr\rvert &\leq \epsilon j^2/4 \text{ w.o.p.-$j$.}
    \end{align}
    Using the bound that $Z_i\leq \rho i + \epsilon j/100$ for $i\leq j$ w.o.p.-$j$,
    \begin{align}
      \frac{u_0}{2(1+\lambda)} - \sum_{i=1}^j ( f_0 + Z_i)
      &\geq  - j - \rho j(j+1)/2 - \epsilon j^2/100 \text{ w.o.p.-$j$}\nonumber\\
        &\geq -(\rho+\epsilon/2) j^2/2  \text{ w.o.p.-$j$}.
      \label{eq:highj2}
    \end{align}
    Now \eqref{eq:highj1} and \eqref{eq:highj2} prove \ref{i:min2}.    
  \end{proof}
  
  Generally speaking, it is most useful to construct odometers putting a high density
  of particles to sleep. Such odometers correspond to infection paths in layer percolation
  that increase in their second coordinate
   at rate close to $\crist$, which can be constructed as \emph{greedy infection paths}
  (see \cite[Section~5.4]{hoffman2024density}).
  The following lemma gives the rate of growth of their first coordinate.  
  \begin{lemma}\label{lem:greedy.path}
    Fix $\epsilon>0$. Let $(0,0)_0\to(r_1,s_1)_1\to\cdots$ be the $k$-greedy infection
    path. Fix $\epsilon>0$.
    For constants $c$, $C$, and $k_0$ depending only on $\lambda$ and $\epsilon$,
    it holds for all $k\geq k_0$ and $j\geq 1$ that
    \begin{align*}
      \P\bigl( r_j \geq (\crist - \epsilon)j^2/2 \bigr) \geq 1 - Ce^{-cj}.
    \end{align*}
  \end{lemma}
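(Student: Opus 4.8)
The plan is to analyze the first coordinate $r_j$ of the $k$-greedy infection path directly, by relating it back to the odometer picture where we can apply Lemma~\ref{lem:min.odom}. Recall from Proposition~\ref{prop:4.6} that an infection path $(r_0,s_0)_0\to\cdots$ corresponds to an extended odometer $u$ with $r_j = \rt{u}{j} - \rt{\m}{j}$, where $\m$ is the minimal odometer of $\eosos{N}(\sigma,u_0,f_0)$. The greedy infection path is designed so that its second coordinate $s_j$ grows at rate close to $\crist$, i.e., the corresponding odometer puts roughly $\crist j$ particles to sleep on $\ii{1,j}$; I want to extract from this that its first coordinate grows like $(\crist-\epsilon)j^2/2$ w.o.p.-$j$. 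The natural route: $\rt{u}{j}$ is the number of \Right\ instructions among the first $u(j)$ at site~$j$, and the mass-balance equation \eqref{eq:mass.balance} telescoped from site~$1$ to site~$j$ expresses the net rightward flow across the edge $(j,j+1)$ as (total particles on $\ii{1,j}$ initially) minus (particles left sleeping on $\ii{1,j}$) plus boundary terms. With $\sigma$ i.i.d.-$\Ber(\rho)$ contributing $\approx\rho j$ particles and $s_j\approx\crist j$ sleeping, the net flow across edge $j$ is of order $(\rho-\crist)j<0$, but the cumulative right-instruction count $\rt{u}{j}$ itself, being a sum of flows, picks up a $j^2$ scaling.

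More precisely, here is the order of steps. First, invoke the construction and basic properties of $k$-greedy infection paths from \cite[Section~5.4]{hoffman2024density}: for $k\geq k_0(\lambda,\epsilon)$, the greedy path satisfies $s_j\geq(\crist-\epsilon/2)j$ w.o.p.-$j$ (this is the defining feature of greedy paths and should be quotable, perhaps combined with \cite[Proposition~5.18]{hoffman2024density}). Second, translate to the odometer: fixing the corresponding extended odometer $u\in\eosos{N}(\sigma,u_0,f_0)$, write $\rt{u}{j} = r_j + \rt{\m}{j}$ and use Lemma~\ref{lem:min.odom}\ref{i:min2} to get $\rt{\m}{j}\geq -(\rho+\epsilon')j^2/2$ w.o.p.-$j$ for suitably small $\epsilon'$. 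Third — and this is where the actual content lies — bound $\rt{u}{j}$ from below in terms of $s_j$ using the mass-balance structure: summing \eqref{eq:mass.balance} over $v\in\ii{1,j}$ gives a relation of the form $\rt{u}{j} - \lt{u}{1} = \sum_{v=1}^j\sigma(v) - s_j + (\text{terms involving }\lt{u}{j+1}\text{ and }\rt{u}{0},\lt{u}{1})$, and since $\lt{u}{j+1}\geq 0$ one gets a one-sided bound; iterating this (the cumulative flow across successive edges) or directly identifying $\rt{u}{j}$ with a double sum produces the $-(\rho+o(1))j^2/2 + \Omega(s_j\cdot j)$-type lower bound. Combining with $s_j\gtrsim\crist j$ and the bound on $\rt\m j$, and choosing $\epsilon'$ small relative to $\epsilon$, yields $r_j=\rt{u}{j}-\rt{\m}{j}\geq(\crist-\epsilon)j^2/2$ w.o.p.-$j$; a union bound over the finitely many w.o.p.-$j$ events completes the argument.

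The main obstacle I anticipate is step three: correctly accounting for the boundary and the quadratic accumulation. The subtlety is that $\rt{u}{j}$ is not simply the net flow across a single edge but accumulates flow, so one must be careful that the negative drift $(\rho-\crist)$ in the per-edge flow integrates to the right quadratic coefficient and that the boundary terms $\lt{u}{1}$, $\rt{u}{0}$ (controlled by $u_0$ and $f_0$, hence $O(n^2)$ at worst but here we need bounds uniform in $j$, so we should use the $n$-free estimates) don't spoil the bound for large $j$. It is likely cleanest to mirror the bookkeeping already done inside the proof of Lemma~\ref{lem:min.odom} — that is, to apply \cite[Proposition~5.8]{hoffman2024density} (or its analogue for greedy rather than minimal odometers) directly to $\rt{u}{j}$ with the centering \eqref{eq:5.8.center} adjusted by the sleep count, rather than re-deriving the telescoped mass-balance identity by hand. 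If such a concentration statement for greedy-path odometers is available in \cite{hoffman2024density}, the proof becomes a near-verbatim repeat of Lemma~\ref{lem:min.odom}\ref{i:min2} with $-\sum(f_0+Z_i)$ replaced by $\sum(\crist - \rho + o(1))\cdot i$, and the heavy lifting is entirely in citing the right concentration bound.
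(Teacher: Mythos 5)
The paper's proof of this lemma is two lines: by \cite[Proposition~5.18]{hoffman2024density} choose $k_0$ so that $\crist[k]\geq\crist-\epsilon/2$ for $k\geq k_0$, then quote \cite[Proposition~5.16]{hoffman2024density}, which is a concentration bound for the \emph{first} coordinate $r_j$ of the greedy path stated entirely within layer percolation (applied with $t=\epsilon\sqrt j/4$ to get the $e^{-cj}$ rate). So your closing instinct is exactly right — the heavy lifting is in citing the right concentration estimate, and such an estimate exists — but it applies to $r_j$ directly, so the entire detour through $s_j$, the corresponding odometer, and the mass-balance equation is unnecessary.

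That detour, as you have written it, also has two concrete problems. First, a direction error: to lower-bound $r_j=\rt{u}{j}-\rt{\m}{j}$ you need an \emph{upper} bound on $\rt{\m}{j}$, whereas Lemma~\ref{lem:min.odom}\ref{i:min2} gives a lower bound; you would have to go back to the two-sided concentration of \cite[Proposition~5.8]{hoffman2024density} around \eqref{eq:5.8.center}. Second, and more seriously, the telescoped mass balance controls only the per-edge net flow $\rt{u}{j}-\lt{u}{j+1}=f_0+Z_j-s_j$, which is \emph{linear} in $j$ (and negative). Since $\rt{u}{j}$ and $\lt{u}{j+1}$ are each of order $j^2$, dropping $\lt{u}{j+1}\geq0$ discards the entire main term and yields nothing like a quadratic lower bound. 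The quadratic growth comes from iterating the approximate per-site identity $\rt{u}{i}\approx\lt{u}{i}$ (each instruction stack is balanced between \Left\ and \Right), whose per-site error is $O(\sqrt{u(i)})=O(i)$ and hence $O(j^2)$ in aggregate — the same order as the target — unless one has a concentration bound for these fluctuations along the specific (greedy) odometer. That concentration is precisely the content of the propositions you would be citing, so the ``by hand'' mass-balance route cannot close on its own; it collapses back to the citation, which is what the paper does. Your write-up acknowledges this possibility as a fallback, but the primary plan as stated does not constitute a proof.
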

  \begin{proof}
    This result follows immediately from \cite[Proposition~5.16]{hoffman2024density}.
    In detail, by \cite[Proposition~5.18]{hoffman2024density}
    we can choose $k_0=k_0(\lambda,\epsilon)$ 
    large enough that $\crist[k] \geq \crist-\epsilon/2$ for $k\geq k_0$.
    Now apply \cite[Proposition~5.16]{hoffman2024density} with $n$ in \cite[eq.~(38)]{hoffman2024density}
    replaced by $j$ and $t$ replaced by $\epsilon \sqrt{j}/4$.
  \end{proof}

  \begin{proof}[Proof of Theorem~\ref{thm:tailbound}, upper bound]
    Let $u_0=\ceil{3(1+\lambda)n^2}$. If $\sigma(0)=1$, then take $f_0=1$; otherwise
    let $f_0=0$. Let $N\geq n$, and
    consider the set of extended odometers $\eosos{N}(\sigma,u_0,f_0)$.
    Let $g$ be the extended odometer in this class
    corresponding to the $k_0$-greedy infection path, where $k_0$ is the constant
    from Lemma~\ref{lem:greedy.path} for $\epsilon=(\crist-\rho)/3$. 
    We will show that $g$ is a genuine odometer w.o.p.-$n$, i.e., it takes nonnegative values everywhere on
    $\ii{0,N}$. Here the constants in w.o.p.\ bounds may depend on $\lambda$ and $\rho$.
    By Lemma~\ref{lem:min.odom}\ref{i:min1},
    it holds w.o.p.-$n$ that $\m(j)\geq 0$ for all $0\leq j\leq n$,
    since $n^2-\rho j^2/2\geq 0$ for $j\leq n$. Hence by the minimality of $\m$ we have
    \begin{align*}
      g(j) \geq \m(j)\geq 0 \text{ for all $0\leq j\leq n$ w.o.p.-$n$}
    \end{align*}
    To show that $g(j)$ is likely to be positive for $j> n$,
    we apply Lemma~\ref{lem:min.odom}\ref{i:min2} together with Lemma~\ref{lem:greedy.path}
    to show that for $j>n$,
    \begin{align*}
      \P\Bigl( r_j+\rt{\m}{j} < (\crist-\rho-2\epsilon)j^2/2\Bigr) \leq Ce^{-cj},
    \end{align*}
    where $r_j$ is the first coordinate of the $k_0$-greedy infection path at step~$j$.
    According to Proposition~\ref{prop:4.6}, we have $g(j)=r_j+\rt{\m}{j}$.
    We have chosen $\epsilon$ small enough that $\crist-\rho-2\epsilon>0$,
    and now a union bound in $j$ shows that
    \begin{align*}
      g(j) \geq 0 \text{ for all $n\leq j\leq N$ w.o.p.-$n$}
    \end{align*}
    Thus, we have shown that with overwhelming probability, there exists
    a (nonnegative) odometer $g$ on $\ii{0,N}$ that is stable on $\ii{1,N-1}$,
    satisfies $g(0)=u_0$, and induces a net flow of $\sigma(0)$ particles from site~$0$ to site~$1$.
    
    By the same reasoning, with overwhelming probability there exists
    an odometer $g'$ on $\ii{-N,0}$ that is stable on $\ii{-N+1,1}$, satisfies $g'(0)=u_0$,
    and induces a net flow of zero particles from $0$ to $-1$ (here we are taking
    $f_0=0$ in all cases).
    Pasting $g$ and $g'$ together produces an odometer that leaves no particles
    at site~$0$, since it induces a net flow of $\sigma(0)$ particles from site~$0$ to site~$1$
    and of zero particles from site~$0$ to site~$-1$. Reducing the value of this pasted odometer
    at site~$0$ to the first non-\Sleep\ instruction before instruction~$u_0$ 
    (which exists w.o.p.-$n$) yields
    an odometer on $\ii{-N,N}$ stable on $\ii{-N+1,N-1}$.
    
    Now, let $m^{(N)}$ denote the odometer stabilizing configuration $\sigma$ on $\ii{-N+1,N-1}$.
    By the least-action principle, we have shown that
    \begin{align*}
      \P \Bigl(m^{(N)}(0)\geq \ceil[\bigl]{3(1+\lambda)n^2}\Bigr) \leq Ce^{-cn}
    \end{align*}
    for some constants $c,C>0$ depending only on $\lambda$ and $\epsilon$
    (which depends on $\rho$).
    Since $m(0)$ is the increasing limit of $m^{(N)}(0)$ as $N\to\infty$
    \cite[eq.~(2.6)]{rolla2020activated}, we have
    \begin{align*}
      \P \Bigl(m(0)\geq \ceil[\big]{3(1+\lambda)n^2}\Bigr) \leq Ce^{-cn}.
    \end{align*}
    And finally by substituting for $n$ and altering the constants we obtain the upper bound
    in Theorem~\ref{thm:tailbound}.
  \end{proof}
  
  \begin{remark}
    In the previous proof, we construct an odometer on $\ii{-N,N}$ stable on
    $\ii{-N+1,N-1}$ taking value $3(1+\lambda)n^2$ at site~$0$ with
    probability $1-Ce^{-cn}$. Since $N$ plays no role in the failure probability, the construction
    can continue indefinitely to form a stable odometer on $\ZZ$, in line with
    Remark~\ref{rmk:projective.limit}. Buried in the proof is the quirk that
    this odometer would stabilize the ARW system leaving a density of particles
    strictly greater than the initial density.
    (This fact is never stated explicitly in this paper, but the way that the proof works
    is to choose a greedy infection path whose growth rate in the second coordinate
    is greater than the initial density. By Proposition~\ref{prop:4.6}, this growth
    rate is the density that the corresponding odometer leaves behind.)
    The explanation for this apparent paradox
    is that the odometer grows as the distance from the origin increases, which has the effect
    of funneling particles in the system toward the origin. The true odometer, of course,
    is translation invariant and does not alter the density of the system.
  \end{remark}

  \subsection{Lower bound on the tail}\label{sec:lower}
  
  First, we prove our claim that conditional on a high initial density on a length~$n$
  interval, the stabilizing odometer at the origin is $\Omega(n^2)$.
  \begin{prop}\label{prop:supercritical}
    Let $\sigma$ be chosen uniformly from all configurations
    of active particles on $I:=\ii[\big]{-\ceil{n/2}+1,\,\floor{n/2}}$ containing exactly
    $\floor{(\crist+\epsilon)n}$ particles.
    Let $g$ be the stabilizing odometer on $I$ for initial configuration $\sigma$.
    For constants $c,C>0$ depending only on $\epsilon$ and $\lambda$,
    \begin{align*}
      \P\biggl(g(0) > \frac{(1+\lambda)\epsilon}{32} n^2\biggr) \geq 1 - Ce^{-cn}.
    \end{align*}
  \end{prop}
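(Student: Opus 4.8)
The plan is to use the layer-percolation machinery in the reverse direction from the upper-bound proof: instead of constructing a small stable odometer, we show that \emph{every} stable extended odometer taking a value $u_0 \le \frac{(1+\lambda)\epsilon}{32}n^2$ at the origin must dip below zero somewhere in $I$, hence cannot be a genuine odometer. By the least-action principle, the stabilizing odometer $g$ on $I$ must therefore take a value larger than $\frac{(1+\lambda)\epsilon}{32}n^2$ at $0$. The conditioning on a uniformly random configuration with exactly $\floor{(\crist+\epsilon)n}$ particles plays the role of forcing a supercritical density on the whole interval, so the region is ``too full'' to be stabilized with a small odometer.

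First I would fix $u_0 \le \frac{(1+\lambda)\epsilon}{32}n^2$ and $f_0 \in \{0,1\}$, and consider $\eosos{N}(\sigma, u_0, f_0)$ restricted to $I$ (equivalently, work with one half of $I$ at a time). Using \cite[Proposition~5.8]{hoffman2024density} exactly as in Lemma~\ref{lem:min.odom}, the minimal odometer $\m$ of this class has $\rt{\m}{j}$ concentrated around $\frac{u_0}{2(1+\lambda)} - \sum_{i=1}^j (f_0 + Z_i)$, where now $Z_i = \sum_{v=1}^i \sigma(v)$ is a partial sum of the conditioned configuration. Because $\sigma$ is uniform with $\floor{(\crist+\epsilon)n}$ particles on an interval of length $n$, a w.o.p.-$n$ lower bound (via a hypergeometric/negative-association concentration estimate, or by comparison with i.i.d.-$\Ber(\crist+\epsilon)$ conditioned on the count) gives $Z_i \ge (\crist + \epsilon/2)i - o(n)\cdot i/n$ for all $i$ in a sub-interval, so that the center above is at most roughly $\frac{\epsilon}{64}n^2 - (\crist+\epsilon/2)j^2/2$, which becomes strongly negative once $j$ is a small constant multiple of $n$. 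Thus $\rt{\m}{j} < 0$ for such $j$, w.o.p.-$n$, and since $\rt{\m}{j} \ge 0$ whenever $\m(j) \ge 0$, this forces $\m(j) < 0$. Any odometer in $\eosos{N}(\sigma,u_0,f_0)$ dominates $\m$ but this does not yet give a contradiction — so the real point is to rule out the \emph{paths} in $\ip{N}(\sigma,u_0,f_0)$, not just the minimal element.

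The key step, following \cite[Proposition~4.1]{hoffman2025cutoff}, is to translate ``every stable extended odometer with $u(0) \le u_0$ takes a negative value'' into the statement that every infection path in the coupled layer percolation has first coordinate $r_j$ growing too slowly to compensate: by Proposition~\ref{prop:4.6} a genuine odometer requires $r_j + \rt{\m}{j} \ge 0$ for all $j$, i.e.\ $r_j \ge -\rt{\m}{j}$, and once $-\rt{\m}{j} \gtrsim (\crist+\epsilon/2)j^2/2$ while the \emph{maximal} growth rate of $r_j$ over all infection paths is only $\crist j^2/2(1+o(1))$ (this is the content of $\crist$ being the limiting growth rate, plus a large-deviation upper bound on $r_j$ — compare the one-sided estimates underlying Lemma~\ref{lem:greedy.path}), we get a contradiction for $j$ of order $n$. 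Doing this on both halves of $I$ and taking $N \to \infty$ as in the upper-bound proof converts the nonexistence of a small stable odometer into the claimed lower bound on $g(0)$, with the stated constant coming from bookkeeping the factors of $\epsilon$, $1+\lambda$, $\frac12$, and the split of $I$.

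The main obstacle I expect is the concentration estimate for the conditioned configuration $\sigma$: the i.i.d.\ input of \cite[Proposition~5.8]{hoffman2024density} is replaced by a without-replacement sample, so one must either re-derive the needed concentration for partial sums of a uniform fixed-count configuration (negative association gives Hoeffding-type bounds, which suffice) or argue by conditioning an i.i.d.-$\Ber(\crist+\epsilon)$ configuration on its total being $\floor{(\crist+\epsilon)n}$, an event of probability $\Theta(1/\sqrt n)$, which only costs a polynomial factor and is absorbed into the w.o.p.\ bounds. A secondary technical point is making sure the layer-percolation coupling and the ``maximal $r_j$'' large-deviation bound are applied on the correct finite interval $I$ rather than on $\ZZ_{\ge 0}$; this is routine but needs care with the two-sided pasting and the roles of $u_0$ and $f_0$ at the origin.
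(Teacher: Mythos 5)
Your overall strategy is the same as the paper's: show that every extended odometer in $\eosos{}(\sigma',u_0,f_0)$ with small $u_0$ goes negative, by combining the downward drift $-(\crist+\epsilon)j^2/2$ of $\rt{\m}{j}$ (via the concentration result \cite[Proposition~5.8]{hoffman2024density} plus Hoeffding-type bounds for the without-replacement sample, exactly as the paper does) with an upper bound of roughly $(\crist+\epsilon/2)j^2/2$ on the maximal first coordinate of an infection path (the paper cites \cite[Lemma~4.6]{hoffman2025cutoff} for this). Your arithmetic for why these two bounds are incompatible at $j\approx n/2$ matches the paper's, and your handling of the hypergeometric concentration is the same fix the paper uses.

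There is, however, a genuine gap in your endgame. You fix $f_0\in\{0,1\}$, but the net flow induced by the stabilizing odometer across the edge $(0,1)$ is not known in advance: it can be any integer of order $\pm n$, since order-$n$ particles may be funneled across that edge before exiting $I$. The paper quantifies over all $0\leq f_0\leq n$ and all $u_0\leq 2\beta(1+\lambda)n^2$ with a union bound over $O(n^3)$ pairs, and crucially it can only run the argument for $f_0\geq 0$ (a larger nonnegative outflow only pushes $\m$ further down, so the estimate survives; a negative $f_0$ would push it up and break the argument). The conclusion for each half of $I$ is therefore conditional: either $g(0)$ is large \emph{or} $g$ induces strictly negative flow from $0$ outward on that side. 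The missing idea is how to rule out the second alternative on both sides simultaneously: if $g$ had strictly positive net inflow into site~$0$ from both neighbors, the mass-balance equation \eqref{eq:mass.balance} would leave at least two particles at site~$0$, contradicting stability there. Without this step (and without the full range of $f_0$), "doing this on both halves and taking $N\to\infty$" does not close the argument; also note that no limit $N\to\infty$ is needed here, since the proposition concerns the stabilizing odometer on the finite interval $I$ itself.
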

  \begin{proof}\newcommand{\altn}{\tilde{n}}
    Let $\altn=\ceil{n/2}-1$.
    The idea is to consider odometers on the interval $\ii{0,\altn}$, which will stand
    in for either the left or right half of $I$.
    Using layer percolation, we will argue that with high probability,
    all stable extended odometers on $\ii{0,\altn}$ that are
    below $(1+\lambda)\epsilon n^2/32$ at site~$0$ are negative
    at site~$\altn$. Hence all stable odometers, including the actual stabilizing odometer,
    are at least $(1+\lambda)\epsilon n^2/32$ at site~$0$.

    Let $\sigma'$ denote a configuration on $\ii{0,\altn}$ obtained by restricting
    $\sigma$ to a given subinterval of length~$\altn+1$ and then relabeling its sites as $0,\ldots,\altn$.
    Suppose that $0\leq f_0\leq n$ and $u_0 \leq 2\beta(1+\lambda)n^2$ for $\beta=\epsilon/64$.
    Our goal is to show that all extended odometers in $\eosos{\altn}(\sigma',u_0,f_0)$ take a negative value.
    We allow the constants in w.o.p.\ bounds to depend on $\lambda$ and $\epsilon$.
    
    Let $\m$ be the minimal odometer in $\eosos{\altn}(\sigma',u_0,f_0)$.
    We claim that
    \begin{align}
      \rt{\m}{\altn} \leq \biggl(\beta-\frac{\crist+\epsilon}{8} + \frac{\epsilon}{32}\biggr)n^2 
         \quad\text{w.o.p.}\label{eq:minnn.bound}
    \end{align}
    To prove this claim, let $Z_j=\sum_{i=1}^j\sigma'(i)$, and note that $\rt{\m}{\altn}$ is within
    $\epsilon n^2/64$ of 
    \begin{align*}
      \frac{u_0}{2(1+\lambda)} - \sum_{i=1}^{\altn}(f_0 + Z_i)
    \end{align*}
    with overwhelming probability by \cite[Proposition~5.8]{hoffman2024density}.
    Since $u_0 \leq 2\beta(1+\lambda)n^2$ and $f_0\geq 0$, 
    we can thus prove \eqref{eq:minnn.bound} by showing that
    \begin{align}
      \beta n^2 - \sum_{i=1}^{\altn}Z_i \leq
      \biggl(\beta-\frac{\crist+\epsilon}{8} + \frac{\epsilon}{64}\biggr)n^2 
         \quad\text{w.o.p.}\label{eq:migration.estimate}
    \end{align}
    The random variable $Z_i$ is the number of the $\floor{(\crist+\epsilon)n}$ particles
    placed randomly in an interval of length~$n$ that fall in a subinterval of length~$i$;
    that is, it is hypergeometrically distributed. Hoeffding's inequality
    gives the same tail bounds as for the distribution
    $\Bin\bigl(\floor{(\crist+\epsilon)n},\,i/n\bigr)$ \cite[Section~6]{hoeffding1963probability}.
    Hence $Z_i\geq (\crist+\epsilon)i+\epsilon n/64$ w.o.p. Taking a union bound over
    $i=1,\ldots,\altn$
    yields \eqref{eq:migration.estimate}, establishing \eqref{eq:minnn.bound}.
    
    By Proposition~\ref{prop:4.6}, for an extended odometer $g\in\eosos{\altn}(\sigma',u_0,f_0)$,
    the value of $\rt{g}{\altn}$ is given by $r_{\altn}+\rt{\m}{\altn}$,
    where $r_{\altn}$ is the column at step~$\altn$ of the corresponding infection
    path.
    Let
    \begin{align*}
      R_{\altn} = 
        \max\bigl\{r_{\altn}\colon (r_0,s_0)\to\cdots\to (r_{\altn},s_{\altn})\in\ip{\altn}(\sigma',u_0,f_0)\bigr\},
    \end{align*}
    the rightmost column infected starting from $(0,0)_0$ at step~$\altn$ of layer percolation.
    By \cite[Lemma~4.6]{hoffman2025cutoff},  we have $R_{\altn}\leq(\crist+\epsilon/2)n^2/8$ w.o.p.
    Using \eqref{eq:minnn.bound} to bound $\rt{\m}{\altn}$, 
    \begin{align*}
      R_{\altn}+\rt{\m}{\altn} &\leq \frac{\crist+\epsilon/2}{8}n^2 + 
      \biggl(\beta-\frac{\crist+\epsilon}{8} + \frac{\epsilon}{64}\biggr)n^2\quad\text{w.o.p}\\
      &\leq \bigl(\epsilon/16 + \beta - \epsilon/8 + \epsilon/64\bigr)n^2
        \leq -\epsilon n^2/32\quad\text{w.o.p.}
    \end{align*}
    Hence all extended odometers in $\eosos{\altn}(\sigma',u_0,f_0)$ are negative
    at site~$\altn$ w.o.p.
    
    To complete the proof, we now consider the left and right halves
    of the interval $I$. 
    Let $I_{-}:=\ii[\big]{-\ceil{n/2}+1,0}$, and let
    $I_+$ be defined as $\ii[\big]{0,\floor{n/2}}$ if $n$ is odd 
    or $\ii[\big]{0,n/2-1}$ if $n$ is even, so that $I_-$ and $I_+$
    are intervals of length~$\altn+1$.
    Applying our result on $\eosos{\altn}(\sigma',u_0,f_0)$ together
    with a union bound over the $O(\epsilon\lambda n^3)$
    choices of $u_0\leq 2\beta(1+\lambda)n^2$
    and $0\leq f_0\leq n$, we conclude that with overwhelming probability,
    if an odometer $g$ on $I_+$ is stable on $I_+\cap[1,\infty)$, then either
    $g(0) > 2\beta(1+\lambda)n^2$ or $g$ induces a strictly negative
    particle flow from site~$0$ to site~$1$; this stems from our (necessary) assumption
    that $f_0\geq 0$ for our result on $\eosos{\altn}(\sigma',u_0,f_0)$.
    By symmetry, it holds with overwhelming probability that
    if an odometer $g$ on $I_-$ is stable on $I_-\cap(-\infty,-1]$, then either
    $g(0) > 2\beta(1+\lambda)n^2$ or $g$ induces a strictly negative
    particle flow from site~$0$ to site~$-1$.
    
    Now, suppose $g$ is the odometer stabilizing $\sigma$ on $I$. 
    Since it is stable on
    $I_+\cap[1,\infty)$ and $I_-\cap(-\infty,-1]$, it holds with overwhelming
    probability that either $g(0)>2\beta(1+\lambda)n^2$ or
    $g$ induces strictly positive net flows both
    from site~$-1$ to site~$0$ and from site~$1$ to site~$0$.
    But this second alternative is impossible, since then $g$ would leave
    multiple particles at site~$0$ and fail to be stable there.
  \end{proof}

  \begin{proof}[Proof of Theorem~\ref{thm:tailbound}, lower bound]
    Choose some $\epsilon>0$ so that $\crist<\rho+\epsilon<1$, which we can do since
    $\crist<1$ for any sleep rate $\lambda>0$ by \cite{hoffman2023active}.
    Let $I=\ii[\big]{-\ceil{n/2}+1,\,\floor{n/2}}$, an interval of $n$ integers.
    Let $\sigma$ be the initial i.i.d.-$\Ber(\rho)$ configuration of particles on $\ZZ$,
    and define $\Ee_n$ as the event that $\sigma$ contains at least $\floor{(\rho+\epsilon)n}$ particles
    on $I$.
    
Take $\sigma'$ to be an empty configuration if $\Ee_n$ fails and otherwise to consist
    of exactly $\floor{(\rho+\epsilon)n}$ particles chosen uniformly at random from
    the particles of $\sigma$ in $I$. By exchangeability, given $\Ee_n$ the configuration
    $\sigma'$ is uniform over all collections of $\floor{(\rho+\epsilon)n}$ particles
    on $I$. By Proposition~\ref{prop:supercritical}, the odometer stabilizing  $\sigma'$
    given $\Ee_n$ is greater than $\alpha n^2$ at $0$ with probability $1 - Ce^{-cn}$,
    for $\alpha=(1+\lambda)(\rho+\epsilon-\crist)/32$.
    Since the stabilizing odometer on $I$ is increasing in the initial configuration,
    and the stabilizing odometer on $\ZZ$ is bounded from below by the stabilizing
    odometer on $I$, we have proven that
    \begin{align*}
      \P\bigl(m(0) > \alpha n^2\bigmid \Ee_n\bigr) \geq 1 - Ce^{-cn}
    \end{align*}
    for constants $\alpha$, $c$, and $C$ depending on $\rho$ and $\lambda$.
    In particular,
    \begin{align}\label{eq:odometer.big}
      \P(m(0)>\alpha n^2\mid\Ee_n) \geq 1/2,\qquad
    \end{align}
    for all $n\geq n_0(\rho,\lambda)$.
    
    By basic large deviations theory, we have $\P(\Ee_n)\geq Ce^{-cn}$ for some constants $c,C>0$
    depending on $\rho$ and $\crist$. Hence,
    \begin{align*}
      \P\bigl(m(0)>\alpha n^2\bigr)\geq Ce^{-cn}
    \end{align*}
    for some constants $\alpha,c,C>0$ depending on $\rho$ and $\lambda$.
    And this establishes the lower bound of Theorem~\ref{thm:tailbound}.
  \end{proof}
  
  \begin{remark}\label{rmk:obstacle}
    Using a more careful bound on the probability of event $\Ee_n$,
    one could prove that for constants $c=c(\lambda)$, $C=C(\lambda)$, and
    $n_0=n_0(\lambda,\rho)$, it holds for all $n\geq n_0$ that
    \begin{align*}
      \P\bigl( m(0) \geq n \bigr) &\geq Ce^{-c(\crist-\rho)^2\sqrt{n}}.
    \end{align*}
    In this version of the result, one can fix the sleep rate $\lambda$
    and then see that the asymptotic rate function
    \begin{align*}
      \limsup_{n\to\infty}-\frac{1}{\sqrt{n}}\log\P(m(0)\geq n)
    \end{align*}
    converges to $0$ as $\rho\nearrow\crist$. This result suggests that
    $\E m(n)\to\infty$ as $\rho\nearrow\crist$ but cannot prove it
    without some control on $n_0$.
    
    The reason we cannot control $n_0$
    is that the layer percolation machinery establishes supercritical behavior
    when stabilizing $(\crist+\epsilon)n$ particles on an interval of length~$n$
    for sufficiently large $n$ but gives no information on how large $n$ needs to be.
    It will require new ideas to remedy this deficiency.
    The ultimate source of it is that the proof \cite[Proposition~5.18]{hoffman2024density}
    that the layer percolation growth rate converges to $\crist$ is entirely qualitative,
    which stems from its use of subadditivity.
  \end{remark}

  \section*{Acknowledgments}
  T.J.\ was partially supported by NSF grant DMS-2503779 and gratefully acknowledges support from
Uppsala University and the Wenner--Gren Foundation. J.R.\ was partially supported by a Simons Foundation Targeted Grant 
awarded to the R\'enyi Institute. We thank the Erd\H{o}s Center for its hospitality
in hosting workshops where some of this research was carried out. We thank Ahmed Bou-Rabee
for pointing us to results giving bounds on the odometer for the abelian sandpile.
\bibliographystyle{amsalpha}
\bibliography{expectation}

\end{document}